\newtheorem{theorem}{Theorem}[section]
\newtheorem{lemma}[theorem]{Lemma}
\newtheorem{corollary}[theorem]{Corollary}
\theoremstyle{remark}
\theoremstyle{definition}
\numberwithin{equation}{section} \makeatother
\begin{document}


\title[Noncommutative peak interpolation revisited]{Noncommutative peak interpolation revisited}
\author{David P. Blecher}

\address{Department of Mathematics, University of Houston, Houston, TX
77204-3008}
\email[David P.
Blecher]{dblecher@math.uh.edu}

\thanks{Blecher was partially supported by a grant from
the National Science Foundation.}
\subjclass[2010]{Primary 46L85, 46L52, 47L30, 46L07;
Secondary   32T40, 46H10, 47L50, 47L55}

\maketitle

\begin{abstract}
Peak interpolation is concerned with a foundational kind of
 mathematical task:
 building functions in a fixed algebra $A$ which have prescribed values or behaviour on a fixed closed subset (or on 
several disjoint subsets).  In this paper we do the same but now $A$ is an  algebra of operators on a Hilbert space.
We briefly survey this {\em 
noncommutative peak interpolation}, which we have studied with coauthors
in a long series of papers, and whose basic theory now appears to be 
approaching its culmination.  This program developed 
from, and is based partly on, 
 theorems of Hay and Read whose proofs were spectacular,
but therefore inaccessible to an uncommitted reader.   
We give short proofs of these results, using recent progress in
noncommutative peak interpolation, and conversely give examples of the use of these theorems
in peak interpolation.  For example, we prove a useful new noncommutative peak interpolation
theorem.     
\end{abstract}

\section{Introduction} 

\noindent For us, an operator algebra is a norm closed algebra of operators on a Hilbert space.
In `noncommutative peak interpolation', one generalizes classical `peak interpolation' to the setting of
operator algebras, using Akemann's noncommutative topology \cite{Ake2,Ake,AP}. In classical peak 
interpolation
the setting is a subalgebra $A$ of $C(K)$, the continuous scalar functions on a compact Hausdorff space $K$, and one tries to build
functions  $f \in A$ as in Figure 1 which have prescribed values or behaviour on a fixed closed subset $E \subset K$, and $\Vert f \Vert = \Vert f_{|E} \Vert$.
\begin{figure}[ht]
\centering
\includegraphics[totalheight=1.6in]{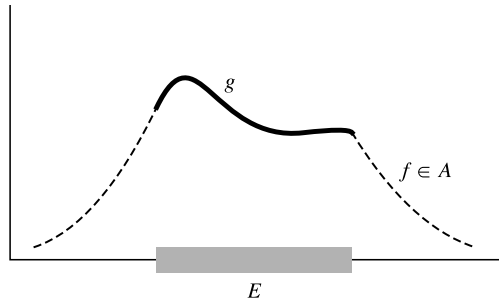}
\caption{Seek $f \in A$ with $f_{\vert E} = g$ or $f  \chi_E = g \, \chi_E$. }
\label{fig1}
\end{figure}
The sets $E$ that `work' for this are the $p$-sets, namely the  closed sets whose
characteristic functions are in $A^{\perp \perp}$.   {\em Glicksberg's
peak set theorem} characterizes these sets as the intersections of {\em peak sets},
i.e.\ sets of form $k^{-1}(\{ 1 \})$ for $k \in A, \Vert k \Vert = 1$. In the separable case they are just the peak sets. 
A typical peak interpolation result,
originating in results of E. Bishop (see e.g.\ e.g.\ II.12.5 in \cite{Gam}), says that if $h 
\in C(K)$ is strictly positive, and if $g$ is a  continuous 
function  on such a set $E$  which is the  restriction of a
function in $A$, and whose absolute value is  dominated 
by the `control function' $h$ on $E$ (see Figure 2A), has an extension $f$ in $A$ satisfying $|f| \leq h$ on 
all of $K$ (see Figure 2B).    
\begin{figure}[ht]
\centering
\includegraphics[totalheight=1.6in]{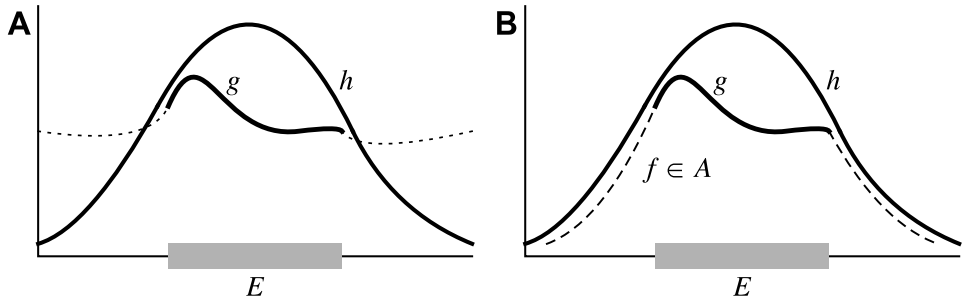}
\caption{{\rm A}: Given `control' $h \geq |g|$ on $E$.  {\rm B}: Seek $f \in A$ with $f_{\vert E} = g$ and $|f| \leq h$ everywhere.}
\label{fig2}
\end{figure}
A special case of interest is
when $h = 1$; for example when this is applied to the 
disk algebra one obtains the well known Rudin-Carleson theorem
(see II.12.6 in \cite{Gam}).    It also yields `Urysohn type lemmas'
in which one finds a function in $A$ which is $1$ on $E$ and close to zero on a 
closed set $F$ disjoint from $E$ (it can be zero on $F$ if $F$ is also a $p$-set).   We discuss below generalizations of all of these results.  

Noncommutative interpolation for $C^*$-algebras  
has been studied by many $C^*$-algebraists, and is  
a key application of Akemann's noncommutative topology.  See particularly L.G. Brown's treatise
\cite{Brown}.  For example 
Akemann's  Urysohn lemma for $C^*$-algebras (see e.g.\ \cite{Ake2}) is a noncommutative
interpolation result of a selfadjoint flavor, and this result plays a role for example in
 recent approaches to the important Cuntz semigroup \cite{ORT}.

  Noncommutative peak interpolation for (possibly nonselfadjoint) operator algebras
was introduced in the thesis of our student Damon Hay \cite{Hayth,Hay}.   
It is the theory one gets if one combines the two theories discussed above: classical peak interpolation and 
the $C^*$-algebra variant.  Here we have 
a subalgebra $A$ of a possibly noncommutative $C^*$-algebra $B$,
and we wish to   build operators in $A$ which have 
prescribed behaviours with respect to Akemann's  noncommutative 
generalizations of closed sets, which are certain projections $q$ in $B^{**}$.   In the 
case that $B = C(K)$, the characteristic function $q = \chi_E$ of an open or  closed set
$E$ in $K$ may be viewed as an element of $C(K)^{**}$ in a natural way since 
$C(K)^*$ is a certain space of measures on $K$.   Via semicontinuity, it is natural 
to declare a projection  $q \in B^{**}$ to be {\em open} if it is a increasing (weak*) limit of 
positive elements in $B$, and closed if its `perp'  $1-q$ is {\em open} (see \cite{Ake2,Ake}).    Thus if $B = C(K)$
the open or closed projections are precisely the   characteristic functions of  open or  closed sets.  Thus one has a 
copy of the topology in the second dual (one may work in  a small subspace of the second dual if one prefers).  We will not discuss   noncommutative topology in detail here, but with the definitions above one can now try to prove inside $B^{**}$ noncommutative versions of the 
basic results in topology, where unions of  open sets are replaced by suprema $\vee_i \, p_i$
of projections, etc.   Open  projections arise naturally in   functional
analysis.  For example, they come naturally out of the `spectral theorem/functional calculus': the  spectral projections of a selfadjoint operator $T$ corresponding to open sets in the spectrum of $T$,  are open projections.  The range projection of  any operator is an open projection.   We state Akemann's noncommutative Urysohn lemma in the case that $B$ is
a unital $C^*$-algebra:  Given $p, q$ closed projections in $B^{**}$, with $p q = 0$, there exists
 $f \in B$ with $0 \leq f \leq 1$ and
$f p =0$  and $f q = q$.   Indeed Akemann's
noncommutative topology has good `separation' properties, whereas other
approaches to noncommutative topology are usually spectacularly far from even being
`Hausdorff'.  Note that the classical statement $f = g$ on $E$ (see Figure 1), 
becomes $f q = gq$ where $q$ is the projection playing the role of 
(the characteristic function of) $E$.  On the other hand, an order relation
like $|f| \leq g$ on $E$, might become the operator theoretic statement $f^* q f \leq g^* q g$, or something similar.
 
Over the years we with coauthors (particularly Hay, Neal, and Read) have developed a number of noncommutative peak interpolation results, which
when specialized to the case $B = C(K)$ collapse to classical peak interpolation theorems.
Moreover, in the course of this investigation 
 striking applications have emerged to the theory of one-sided ideals or
hereditary subalgebras of operator 
algebras, the theory of approximate identities, noncommutative topology, noncommutative function
theory, the
generalization of Hilbert C*-modules to nonselfadjoint algebras,
and other topics (see e.g.\ \cite{BHN}, \cite{BRead},
 \cite{BRII}, \cite{BNI}, \cite{BNII}, \cite{Hayr}, \cite{Ueda}).
    In many of these applications one is mimicking $C^*$-algebra techniques, but using ideas from our theory.  
Current with the most recent version of \cite{BRII} the peak interpolation program appears to be 
approaching its culmination.  The basic theory seems now to be essentially complete, and we 
have a good idea of what works and what does not.  What remains is further applications, and  
in particular those of the kind we have been doing in the cited papers, namely the generalization of more $C^*$-algebraic techniques and 
results to general operator algebras.  Also  there will be applications inspired by
the matching function theory (see e.g.\ \cite{Gam,Stout}).     

Two of the most 
powerful results in the theory are Read's theorem on approximate identities \cite{Read}, and
Hay's main theorem in \cite{Hay,Hayth}.  These are  foundational results in
the subject, but the extreme depth of their proofs hindered their accessibility.  In the present note
we give short proofs of both of these results by using noncommutative peak interpolation.  The proofs are still quite nontrivial, but we have written them
so as to be readable in full detail in an hour or so by a functional analyst.    The crux of the proof is a special case of
a new noncommutative peak interpolation theorem, the latter also proven here,
 generalizing the classical one mentioned above in the first
paragraph of our paper (involving $f$ and $h$).      Below we will give examples of 
applications of both results to peak interpolation.  Indeed our paper is in part a brief 
survey of the basic ideas of noncommutative peak interpolation.  

Turning to notation, we will delay  many of the noncommutative definitions and features until they are needed.  The reader is referred for example to \cite{BLM,BHN,BRead} for more details on
some of the topics below if needed.  We will use silently the fact from basic analysis that
$X^{\perp \perp}$ is the weak* closure in $Y^{**}$ of
a subspace  $X \subset
Y$, and is isometric to $X^{**}$.   Recall that 
$X$ is an {\em $M$-ideal} in $Y$ if $X^{\perp \perp} \oplus_\infty L = Y^{**}$
for a subspace $L$ of $ Y^{**}$.  This notion was introduced by Alfsen and Effros;
and in this case for any $y \in Y$ there exists 
$x \in X$ with the distance $d(y,X) = \Vert y -x \Vert$ (see 
\cite{HWW}).     For us a {\em projection}
is always an orthogonal projection.  An {\em  approximately unital} operator algebra
is one that has a contractive approximate identity (cai).
 If $A$ is a nonunital
operator algebra represented (completely) isometrically on a Hilbert
space $H$ then one may identify the unitization 
$A^1$ with $A + {\mathbb C} I_H$.   If $A$ is unital (i.e.\ has an identity
of norm $1$) we set $A^1 = A$.  If $A$ is an 
operator algebra then the  
second dual $A^{**}$ is  an operator algebra too with its (unique)
Arens product, this is also the product inherited from the von Neumann
algebra $B^{**}$ if $A$ is a subalgebra of a $C^*$-algebra $B$. 

\section{Read's theorem}

In the following, $A$ is an operator algebra with cai. Let $C$ be any $C^*$-algebra generated by $A$, which
has the same cai by \cite[Lemma 2.1.7 (2)]{BLM}, and let $B = C^1$,
which is a $C^*$-algebra generated by $A^1$. Let $e$ be the weak* limit 
in $(A^1)^{**}$ of the cai, so $e = 1_{C^{**}}$, and let
$q = 1-e$.  Both projections are in the center of $B^{**}$, since $e (\lambda 1 + c) = \lambda e + c = (\lambda 1 + c)e$ for $\lambda \in {\mathbb C}, c \in C$.
  
We first prove a simple noncommutative peak interpolation result that has implications for the unitization of an operator algebra.

\begin{lemma} \label{peakthang222} Suppose that $A$ is an approximately unital operator algebra, and let
$q,e, C, B$ be as above.  
If $q \leq d$ for an invertible $d$ in the positive cone $B_+$,
then there exists an element $g \in A^1$ with $g q = q g = q$, and $g^* g \leq d$.
Thus if   $A$ is nonunital  and $c \in C_+$ with $\Vert c \Vert < 1$ then there
exists an $a \in A$ with $|1 + a|^2 \leq 1-c$.
\end{lemma}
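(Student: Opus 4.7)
My plan is to first reduce the problem via the structure of $A^1$ together with the centrality of $e$ and $q$ in $B^{**}$. Any $g \in A^1$ has the form $g = \lambda 1_B + a$ with $\lambda \in \Cdb$ and $a \in A$. Since the cai of $A$ converges weak* to $e$, one has $ae = ea = a$ for $a \in A$, hence $aq = qa = 0$. Computing $gq = \lambda q + aq = \lambda q$ shows $gq = q$ forces $\lambda = 1$, and then $qg = q$ follows automatically. So the task reduces to producing $a \in A$ with $(1+a)^*(1+a) \leq d$.

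Multiplying on left and right by the bounded positive invertible element $d^{-1/2} \in B$ reformulates this as $\| (1+a) d^{-1/2} \| \leq 1$ in $B$. This is a noncommutative peak interpolation statement: on the closed projection $q$ (whose complement $e$ lies in $A^{\perp\perp}$), I want to interpolate the ``boundary value'' $1$ under the ``height function'' $d^{1/2}$; the required compatibility $q \leq q d q$ is precisely the hypothesis $q \leq d$. At this stage I would invoke a Bishop-type noncommutative peak interpolation theorem---either one of the existing results cited in the introduction from the Blecher--Read--Neal--Hay literature, or a special case of the new NCPI theorem announced earlier in the paper---to supply the required $a \in A$.

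For the second statement, specialize to $d = 1 - c$. Since $\|c\| < 1$, this $d$ lies in $B_+$ (as $c \leq \|c\| \cdot 1_B < 1_B$) and is invertible in $B$ via a Neumann series. The key inequality $q \leq d$ follows because $c \in C_+$ satisfies $c \leq \|c\| \cdot e < e$ in $B^{**}$ (using $e = 1_{C^{**}}$), so $1 - c \geq 1 - e = q$. Applying part one then yields $g = 1 + a \in A^1$ with $|1 + a|^2 \leq 1 - c$.

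The principal obstacle is the peak interpolation step itself: converting the abstract operator inequality $q \leq d$ in $B^{**}$ into a concrete element $a \in A$ requires noncommutative topology machinery (closed and open projections, compatibility conditions, lower semicontinuity of the norm along the cai) that is the genuine content of the subject, while all other steps are routine manipulations with the central projection structure of $e$ and $q$ in $B^{**}$.
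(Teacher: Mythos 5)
Your reduction to finding $a \in A$ with $(1+a)^*(1+a) \leq d$, the reformulation as $\Vert (1+a)d^{-1/2}\Vert \leq 1$, and the deduction of the second assertion from the first with $d = 1-c$ all match the paper and are correct. But the core step is missing: you defer the production of $a$ to ``a Bishop-type noncommutative peak interpolation theorem,'' and that is precisely the content this lemma is supposed to supply. In the paper's logical architecture the lemma is proved first and from scratch, is used to prove Read's theorem, and only afterwards (via Hay's theorem) does the paper establish the general interpolation result (Theorem \ref{peakthang22}) of which this lemma is a special case --- so invoking that theorem here would be circular. The closest genuinely prior result you could cite, Hay's approximate interpolation (Corollary \ref{Haything}), only yields $b^*b \leq d + \epsilon 1$, not the exact inequality $g^*g \leq d$ that the statement (and the $|1+a|^2 \leq 1-c$ application) requires.

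The paper closes this gap with a short self-contained $M$-ideal argument that you could have run directly from your reformulation. Set $f = d^{-1/2}$. Since $e = 1-q$ is central in $B^{**}$, multiplication by $e$ is a contractive projection on $(A^1 f)^{\perp\perp} = (A^1)^{\perp\perp}f$ with range $(Af)^{\perp\perp}$, so $Af$ is an $M$-ideal in $A^1f$; by the Alfsen--Effros proximinality property quoted in the introduction there is $y \in A$ with $\Vert f - yf\Vert = d(f, Af)$. The quotient $A^1f/Af$ embeds in $q(A^1)^{\perp\perp}f$, whence $d(f,Af) = \Vert qf\Vert = \Vert fqf\Vert^{1/2} \leq 1$, using exactly the hypothesis $q \leq d$. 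Then $g = 1-y$ satisfies $gq = qg = q$ and $fg^*gf \leq 1$, i.e.\ $g^*g \leq d$. Note that this is a distance/best-approximation computation, not an appeal to noncommutative topology: no closed or open projections or semicontinuity arguments are needed, only the centrality of $q$ and basic $M$-ideal theory.
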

\begin{proof} Let  $f = d^{-\frac{1}{2}}$.   
Since the `second perp' is the weak* closure,
we have $(A^1f)^{**} = (A^1 f)^{\perp \perp} = (A^1)^{\perp \perp} f$.
Multiplication by the central projection $e = 1-q$ (resp.\ by $q$) is a contractive projection on $(A^1)^{\perp \perp} f$
whose range is $e (A^1)^{\perp \perp} f = A^{\perp \perp} f =
(A f)^{\perp \perp}$ (resp.\ is $q A^{\perp \perp} f$), which may be viewed as 
a subspace of $eB^{**} \oplus_\infty q B^{**}$. So $A f$ is  
an  $M$-ideal in $A^1 f$ as defined in the introduction, 
and, as we said there, there exists $y \in A$ such that
$\Vert f - yf \Vert = d(f,Af)$.  Since 
$$A^1f/Af \subset (A^1f)^{**} / (A f)^{\perp \perp} =
(A^1)^{\perp \perp} f / (e(A^1)^{\perp \perp} f) \cong 
q (A^1)^{\perp \perp} f ,$$
we have $d(f,Af) = \Vert q f \Vert = \Vert f q f \Vert^{\frac{1}{2}} \leq 1.$
Setting $g = 1-y$ then $q g = g q = q$, 
and $\Vert g f \Vert \leq 1,$
so that $f g^* g f \leq 1$ and so $g^* g \leq d$.
For the last assertion take $d = 1-c$.
\end{proof}

{\sc Remark.}  The last assertion of Lemma \ref{peakthang222}  is in fact equivalent to the other assertion.  Since this will not be needed we leave it as an exercise in spectral theory.

\begin{theorem}[Read's theorem on approximate identities]
 \label{readsps} If $A$ is an 
operator algebra with a cai, then $A$ has a cai $(e_t)$ with positive real parts, satisfying
$\Vert 1 - 2 e_t \Vert \leq 1$ for all $t$.
\end{theorem}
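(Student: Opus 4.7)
The strategy is to reduce Read's theorem to a finite-approximation statement and then apply the second assertion of Lemma~\ref{peakthang222}. First, by convexity of $\mathfrak{F}_A := \{e \in A : \Vert 1-2e\Vert \leq 1\}$, it suffices to produce, for every finite $F \subset A$ and every $\epsilon > 0$, an element $e \in \mathfrak{F}_A$ with $\Vert a-ea\Vert,\,\Vert a-ae\Vert<\epsilon$ for each $a \in F$; a net of such $e$'s indexed by pairs $(F,\epsilon)$ is then the desired cai. Positive real parts are automatic, since $\Vert 1-2e\Vert\leq 1$ unpacks to $e^*e\leq\mathrm{Re}(e)$, forcing $\mathrm{Re}(e)\geq 0$.

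For the finite approximation, exploit that the given cai of $A$ is also a cai of $C$. Pick an element $u$ from this cai with $\Vert ua-a\Vert,\,\Vert au-a\Vert$ very small on $F$, and set $c := (1-\delta)u^*u \in C_+$ with small $\delta>0$, so $\Vert c\Vert<1$. A short computation using both $ua\approx a$ and $u^*b \approx b$ for $b \in C$ (since adjoints of cai elements act as a left cai on the generated $C^*$-algebra) shows that $\Vert a^*(1-c)a\Vert$ and $\Vert a(1-c)a^*\Vert$ are then tiny for every $a\in F$. Apply the second assertion of Lemma~\ref{peakthang222} to this $c$: one obtains $a' \in A$ with $|1+a'|^2 \leq 1-c$. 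The natural candidate is $e:=-a' \in A$, which satisfies $(1-e)^*(1-e) \leq 1-c$; hence $\Vert(1-e)a\Vert^2 \leq \Vert a^*(1-c)a\Vert$ is small, giving $ea \approx a$ and similarly $ae \approx a$.

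The main obstacle is to upgrade the bound $\Vert 1-e\Vert\leq 1$ (which the lemma immediately supplies) to the stronger condition $\Vert 1-2e\Vert\leq 1$ required by Read's theorem, while preserving the approximation property. The natural output from the lemma lies in $\mathfrak{r}_A := \{e : \Vert 1-e\Vert\leq 1\}$, which properly contains $\mathfrak{F}_A$; simply halving $e$ lands back in $\mathfrak{F}_A$ but then only half-approximates the identity ($ea\approx a/2$). Bridging this gap is the crux of the argument. The expected resolution is either a sharper choice of $d$ in the main assertion of Lemma~\ref{peakthang222} (engineered so that $g=1-y$ with $y=2e$ satisfies $(1-2e)^*(1-2e)\leq 1-c$ directly), or an iterative scheme combining two outputs of the lemma multiplicatively so as to remain in $\mathfrak{F}_A$ while upgrading a half-approximation to a full one. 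Once the finite approximation in $\mathfrak{F}_A$ is secured, the theorem follows by routine net-assembly.
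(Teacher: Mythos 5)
Your proposal starts at the right place (Lemma \ref{peakthang222} applied to elements manufactured from the cai of $C$), but it stops short of the step that is the actual content of Read's theorem, and it also contains a smaller but genuine error in the two-sided approximation. The smaller issue first: from $(1-e)^*(1-e)\leq 1-c$ you may bound $\Vert(1-e)a\Vert^2=\Vert a^*(1-e)^*(1-e)a\Vert\leq\Vert a^*(1-c)a\Vert$, but $\Vert a(1-e)\Vert^2=\Vert a(1-e)(1-e)^*a^*\Vert$ requires control of $(1-e)(1-e)^*$, which the lemma does not provide; the two products have the same norm but are not comparable as operators, so ``similarly $ae\approx a$'' does not follow, however small $\Vert a(1-c)a^*\Vert$ is. The paper sidesteps this by proving only that $a_s\to q$ weak*, hence $u_s=1-a_s\to e$ weak*, so $xu_s\to x$ and $u_sx\to x$ weakly, and then invoking Mazur's theorem on the convex set $\{(x_iu-x_i,\,ux_i-x_i):u\in{\mathfrak F}_A\}$ to convert two-sided weak approximation into two-sided norm approximation. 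Some such convexification is needed in your argument as well.

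The decisive gap is the one you yourself flag and then defer: upgrading from $\Vert 1-e\Vert\leq 1$ to $\Vert 1-2e\Vert\leq 1$. Neither of your two suggested resolutions works as stated. A ``sharper choice of $d$'' in Lemma \ref{peakthang222} cannot help, because that lemma always outputs $g=1-y$ with $g^*g\leq d\leq 1$, i.e.\ only $\Vert 1-y\Vert\leq 1$ (membership in the paper's ${\mathfrak F}_A=\{a:\Vert 1-a\Vert\leq 1\}$, which is strictly weaker than membership in $\frac12{\mathfrak F}_A=\{a:\Vert 1-2a\Vert\leq 1\}$); no admissible $d$ forces the stronger condition. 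This upgrade is precisely the hard part of the theorem, and the paper resolves it with a specific external tool: the Blecher--Read result \cite[Proposition 2.3]{BRead} that $\frac12{\mathfrak F}_A$ is closed under $n$th roots, so that $(\frac12 e_r)^{1/n}\in\frac12{\mathfrak F}_A$ for the cai $(e_r)\subset{\mathfrak F}_A$ already constructed, together with a von Neumann inequality estimate,
$$\tfrac12\Vert e_r-(\tfrac12 e_r)^{1/n}e_r\Vert\leq\Vert p_n\Vert_{\bar{{\mathbb D}}}\to 0,\qquad p_n(z)=1-z-(1-z)^{1+\frac1n},$$
showing that the doubly indexed net of these $n$th roots is still a cai. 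Without this device (or an equivalent one) your proof is incomplete at its central point.
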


\begin{proof} 
Let $q,e, C, B$ be as above.
Then $e$ is an open projection in the $C^*$-algebra sense
with respect to $B$. 
Indeed, any increasing cai $(b_t)$ for $C$ is a net of positive elements in $B$
increasing to $e$.  Note that $q(1-b_t) =
(1-b_t) q = q$. Consider
the net $(f_s) = (\frac{1}{n} 1 + \frac{n-1}{n} (1-b_t))$, which is a net of strictly positive elements $f_s$
in Ball$(B)$ with weak* limit
$q$.  Here $s = (t,n)$.  We have $q f_s = f_s q = q$, and $f_s \geq q$. By Lemma \ref{peakthang222},
there exists $a_s \in A^1$ with
$a_s q = q a_s = q$ and $a_s^* a_s \leq f_s \leq 1$. We have
$(a_s - q)^* (a_s - q) = a_s^* a_s - q \leq f_s - q \to 0$ weak*.
Using the universal representation we may view $B^{**}$ as a von Neumann
algebra in $B(H)$ in such a way that the weak* topology of $B^{**}$ coincides with the $\sigma$-weak
topology.
Then $f_s \to q$ WOT, and so for $\zeta, \eta \in {\rm Ball}(H)$ we have
$$|\langle (a_s - q) \zeta, \eta \rangle|^2 \leq \Vert (a_s - q) \zeta \Vert^2 =
\langle (a_s - q)^* (a_s - q) \zeta, \zeta \rangle \leq \langle (f_s - q) \zeta, \zeta \rangle \to 0 .$$
So $a_s \to q$ WOT, and hence $a_s \to q$ weak* since these are bounded. If $u_s = 1 - a_s$ then 
$e u_s = u_s e = u_s,$ so $u_s \in A$, indeed $u_s$ is in the convex subset ${\mathfrak F}_A = \{ a \in A : 
\Vert 1 - a  \Vert \leq 1 \}$ of $2 {\rm Ball}(A)$.
Also $u_s \to e$ weak*, so $(x u_s)$ and $(u_s x)$ converge weakly to $x$ for any $x \in A$.   
Applying a standard convexity argument: for $x_1, \cdots, x_m \in A$,  
the norm and weak closures of the convex set 
$$F = \{(x_1 u - x_1, \cdots ,  x_m u - x_m, u x_1 - x_1, \cdots, u x_m  - x_m) : u \in {\mathfrak F}_A \}$$
coincide by Mazur's theorem, and contain $0$, from which it follows 
that $A$ has a bounded approximate identity $(e_r)$ in ${\mathfrak F}_A$ (see e.g.\
the last part of the first paragraph
of the proof of \cite[Theorem 6.1]{BHN} for more details if needed). 

A result of the author and Read states that $\frac{1}{2} {\mathfrak F}_A$ is closed under $n$th roots \cite[Proposition 2.3]{BRead}, so
$((\frac{1}{2} e_r)^{\frac{1}{n}}) \subset \frac{1}{2} {\mathfrak F}_A$. Since $(\frac{1}{2}
e_r)^{\frac{1}{n}} e_r \to e_r$ with $n$ (see below), it is easy to see that $((\frac{1}{2}
e_r)^{\frac{1}{n}})$ is a cai
for $A$ from $\frac{1}{2} {\mathfrak F}_A$. For example, if $a \in A$,
then $$\Vert a - (\frac{1}{2} e_r)^{\frac{1}{n}} a \Vert \leq \Vert a - e_r a \Vert + \Vert (e_r -
(\frac{1}{2} e_r)^{\frac{1}{n}} e_r) a \Vert + \Vert (\frac{1}{2} e_r)^{\frac{1}{n}} (a - e_r a) \Vert \to
0$$ with $n$ and $r$,
since the middle term here is bounded by $2 \Vert a \Vert$ times
$$\frac{1}{2} \Vert e_r - (\frac{1}{2} e_r)^{\frac{1}{n}} e_r \Vert \leq \Vert p_n \Vert_{\bar{{\mathbb D}}} \to 0$$
with $n$, using von Neumann's inequality by applying to $1 - e_t$ the function $p_n(z) = 1 - z - (1-z)^{1 + \frac{1}{n}}$ on the
unit disk in ${\mathbb C}$.
\end{proof}

{\sc Remark.}  The elements $e_t$ in the cai above are as close as one wishes 
to being positive.  Indeed as explained in \cite[Theorem 2.4]{BRead}, 
the $n$th roots in the proof 
have numerical range in a horizontal `cigar' centered on $[0,1]$ 
in the plane, which is as thin as one wishes.  

\medskip

A sample application of Read's theorem to noncommutative peak interpolation:
the point in the last 
proof where we show that $A$ has a bounded approximate identity in ${\mathfrak F}_A$
already solves the main open question that arose in Hay's thesis \cite{Hayth,Hay}: the validity of the noncommutative version of Glicksberg's peak set theorem
mentioned in the first paragraph of the paper, by \cite[Theorem 6.1]{BHN} and the surrounding discussion.
That is, the closed projections in $B^{**}$ which lie in $A^{\perp \perp}$, are precisely the `infs' of 
{\em peak projections} (and in the separable case they are just the peak projections).  The latter are Hay's noncommutative generalization
of peak sets and have many characterizations in the papers cited below.  The following evocative characterization is 
proved at the end of the introduction of \cite{BRII}:
For any operator algebra $A$, the peak projections are the weak* limits of $a^n$ for $a \in {\rm Ball}(A)$ in the cases that such 
limit exists.

\section{Peak interpolation and Hay's theorem}

The following general functional analytic
lemma is due to the author and Hay \cite[Proposition 3.1]{Hay} (its proof  follows the lines of
\cite[Lemma II.12.3]{Gam}).

\begin{lemma} \label{dist}  Let $X$ be a closed subspace of a unital $C^*$-algebra $B$, and let $q \in B^{**}$ be a closed
projection such that $(qx)(\varphi) = 0$ whenever $\varphi \in X^\perp, x \in X$.  Let
$I = \{ x \in X : qx = 0 \}$.  Then the distance $d(x,I) = \Vert qx \Vert$ for all $x \in X$.
\end{lemma}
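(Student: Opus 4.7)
The plan is to establish the two inequalities comprising $d(x,I) = \|qx\|$ separately. The lower bound $d(x,I) \geq \|qx\|$ is immediate: for any $y \in I$ we have $qy = 0$, so $q(x-y) = qx$ in $B^{**}$, giving $\|x-y\| \geq \|qx\|$.

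For the reverse inequality I would use Hahn--Banach duality, which gives
\[ d(x,I) = \sup \bigl\{\, |\varphi(x)| : \varphi \in B^*,\ \|\varphi\| \leq 1,\ \varphi|_I = 0 \,\bigr\}, \]
so it suffices to fix such a $\varphi$ and show $|\varphi(x)| \leq \|qx\|$. Identifying $\varphi$ with its canonical weak*-continuous extension to $B^{**}$, and splitting $\varphi(x) = \varphi(qx) + \varphi((1-q)x)$, the trivial bound $|\varphi(qx)| \leq \|qx\|$ reduces everything to showing $\varphi((1-q)x) = 0$.

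The hypothesis is exactly that $qx \in X^{\perp\perp}$, and combined with $x \in X \subseteq X^{\perp\perp}$ it yields $(1-q)x \in X^{\perp\perp}$; also $q(1-q)x = 0$, so $(1-q)x$ lies in the weak*-closed set $J := \{\, f \in X^{\perp\perp} : qf = 0 \,\}$. On the other hand, weak*-continuity of $\varphi$ on $B^{**}$ together with $\varphi|_I = 0$ forces $\varphi$ to vanish on the weak*-closure $I^{\perp\perp}$. Everything therefore reduces to the sub-claim $J \subseteq I^{\perp\perp}$, i.e., that each element of $J$ is a weak*-limit of elements of $I$.

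This sub-claim is the main obstacle, and should be where closedness of $q$ is essentially used. To attack it, I would take $f \in J$ and use Goldstine's theorem to choose a bounded net $(x_\alpha) \subseteq X$ converging weak* to $f$; then $qx_\alpha \to qf = 0$ weak*. Each $qx_\alpha$ is itself in $X^{\perp\perp}$ by the hypothesis, so is weak*-approximable by elements of $X$; subtracting such approximants from $x_\alpha$ yields $X$-elements whose $q$-images become weak*-negligible. Closedness of $q$ -- i.e., the existence of an increasing net of positive elements of $B$ converging weak* to $1-q$ -- should provide the extra norm-control needed to convert this into an honest net inside $I$ converging weak* to $f$, perhaps via a diagonal or iterated-limit argument in the spirit of the analogous step in Gamelin's proof of Lemma II.12.3.
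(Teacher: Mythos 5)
Your proposal is correct and efficient up to a point: the inequality $d(x,I)\geq \Vert qx\Vert$, the Hahn--Banach reduction to showing $|\varphi(x)|\leq \Vert qx\Vert$ for $\varphi\in I^{\perp}\cap {\rm Ball}(B^*)$, the splitting $\varphi(x)=\tilde\varphi(qx)+\tilde\varphi((1-q)x)$, and the observation that everything comes down to the sub-claim $J=\{f\in X^{\perp\perp}: qf=0\}\subseteq I^{\perp\perp}$ are all valid. The problem is that this sub-claim is not a lemma you can wave at --- it is essentially \emph{equivalent} to the statement being proved. (Indeed, granting $J=I^{\perp\perp}$, left multiplication by $q$ induces an isometry $X^{\perp\perp}/I^{\perp\perp}\cong qX^{\perp\perp}$, which is exactly the distance formula; conversely the distance formula yields $I^{\perp\perp}=J$ by passing to biduals.) So your reduction is a reformulation, not progress: all of the actual content of the lemma, and in particular the only place where closedness of $q$ must enter, is concentrated in the step you leave unproven. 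This is precisely the step that the paper does not reprove but delegates to \cite[Proposition 3.1]{Hay}, whose argument follows \cite[Lemma II.12.3]{Gam}; that argument works on the dual side, showing that for $\varphi\in I^{\perp}$ the functional $a\mapsto\tilde\varphi((1-q)a)$ annihilates all of $X$, and the nontrivial work is exactly there.

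Your sketched attack on the sub-claim also does not work as described. Taking a bounded net $x_\alpha\in X$ with $x_\alpha\to f$ weak* only gives $qx_\alpha\to 0$ \emph{weak*}, with no norm control, and ``subtracting approximants'' of $qx_\alpha$ cannot repair this: if $z\in X$ approximates $qx_\alpha$ weak*, then $q(x_\alpha-z)=qx_\alpha-qz$ is again only weak*-small, never zero, so $x_\alpha-z\notin I$ and you have gained nothing. To convert weak* smallness of $qx_\alpha$ into proximity to $I$ you would need to know that $d(x_\alpha,I)$ is controlled by $\Vert qx_\alpha\Vert$ --- i.e., the lemma itself --- and even that would not suffice, since $\Vert qx_\alpha\Vert$ need not tend to $0$. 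The appeal to closedness of $q$ (``should provide the extra norm-control\dots perhaps via a diagonal or iterated-limit argument'') is therefore a placeholder for the entire proof rather than a step in it. As written, the proposal establishes only the easy inequality and a correct but circular restatement of the hard one.
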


The following result (\cite[Proposition 3.2]{Hay}) follows easily from Lemma \ref{dist}
following the lines of
\cite[Lemma II.12.4]{Gam}.  It is an  `approximate interpolation' result, indeed  it is an `error  epsilon' variant of 
the Bishop type interpolation result seen in and above Figure 2 above.

\begin{corollary} \label{Haything}   If $X$ and $q$ satisfy the conditions in the last result, 
and keeping the notation there, if $d$ is a positive invertible element of $B$ with $d \geq a^* q a$ for some $a \in X$,
and if $\epsilon > 0$, then there exists $b \in X$ with $qb = qa$ and $b^* b \leq d + \epsilon 1$.
\end{corollary}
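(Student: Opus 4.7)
The plan is to follow the classical Bishop--Gamelin argument (Lemma II.12.4 of \cite{Gam}), adapted to the noncommutative setting by feeding Lemma \ref{dist} into the right subspace. The starting observation is that the desired operator inequality $b^* b \leq d + \epsilon 1$ is equivalent to the norm inequality $\|bf\| \leq 1$, where $f := (d + \epsilon 1)^{-1/2} \in B$ (well defined and invertible in $B$ since $d + \epsilon 1 \geq \epsilon 1 > 0$). I will search for $b$ of the form $a - y$ with $y \in I$, so that $qb = qa$ comes for free; the task then reduces to producing $y \in I$ with $\|(a - y) f\| \leq 1$.

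To estimate $d(af, If)$ I would apply Lemma \ref{dist} to the subspace $Xf \subseteq B$ with the same closed projection $q$. The hypothesis of that lemma is verified by a short duality check: for $\psi \in (Xf)^\perp$ and $x \in X$, the weak*-continuity of right multiplication by $f$ on $B^{**}$ gives $\psi(qxf) = \tilde\psi(qx)$, where $\tilde\psi(u) := \psi(uf)$ defines a functional in $X^\perp$ (because $\psi$ annihilates $Xf$); the hypothesis on $X$ and $q$ then yields $\tilde\psi(qx) = 0$. Since $f$ is invertible in $B$, the corresponding ideal $\{z \in Xf : qz = 0\}$ coincides with $If$, and Lemma \ref{dist} delivers $d(af, If) = \|qaf\|$.

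Next I would bound $\|qaf\|$. Because $a^* q a \leq d$ and $f$ commutes with $d$, we have $f a^* q a f \leq f d f = d(d + \epsilon 1)^{-1}$, whose spectrum lies in $[0, \|d\|/(\|d\| + \epsilon)]$. Hence $\|qaf\|^2 = \|f a^* q a f\| \leq \|d\|/(\|d\| + \epsilon) < 1$. Therefore $d(af, If) < 1$, so one may pick $y \in I$ with $\|(a - y) f\| \leq 1$. Setting $b := a - y$ gives $qb = qa$ along with $f b^* b f \leq 1$, and multiplying on both sides by $f^{-1} = (d + \epsilon 1)^{1/2}$ yields $b^* b \leq d + \epsilon 1$, as required.

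The one substantive step is the duality check in the second paragraph; what makes it go through is that $f$ lives in $B$ rather than in $B^{**}$, so right multiplication by $f$ respects the module structures on $B$ and $B^*$ in a way that preserves the annihilator condition on $q$. Everything else is rescaling and bounded functional calculus.
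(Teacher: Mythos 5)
Your proof is correct and is essentially the argument the paper intends: the paper derives this corollary from Lemma \ref{dist} ``following the lines of \cite[Lemma II.12.4]{Gam}'', which is precisely your scheme of passing to the subspace $Xf$ with $f=(d+\epsilon 1)^{-1/2}$, checking the annihilator hypothesis via the module action of $f$ on $B^*$, and using $\Vert f a^* q a f\Vert \leq \Vert d\Vert/(\Vert d\Vert+\epsilon)<1$ to get $d(af,If)<1$. No gaps; the duality check and the use of invertibility of $f$ to identify $\{z\in Xf: qz=0\}$ with $If$ are exactly the points that need care, and you handle both.
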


\begin{theorem}[Hay's theorem on one-sided ideals]  \label{haysps}  If $A$ is a unital 
subalgebra of a unital $C^*$-algebra $B$, then the right ideals $J$ in $A$ which have a left
contractive approximate identity, are precisely the right ideals $\{ a \in A : a = pa \}$ for an 
open projection $p \in B^{**}$ which lies in $A^{\perp \perp}$.
If these hold then $J^{\perp \perp} = p A^{**}$.
\end{theorem}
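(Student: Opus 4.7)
The plan is to prove the two implications separately. For the forward direction (right ideal with left cai implies open projection in $A^{\perp \perp}$), I would reduce to the classical Akemann correspondence for closed right ideals in the ambient $C^*$-algebra $B$, which keeps things conceptually clean. For the reverse direction (open projection in $A^{\perp \perp}$ implies right ideal with left cai), I would let Corollary~\ref{Haything} drive the construction, using a Cauchy--Schwarz trick to convert weak* convergence into weak convergence and a Mazur-type convexity argument to assemble the result; this direction is where the real technical work sits.

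For the forward direction, I would let $(e_\alpha) \subseteq \Ball{A} \cap J$ be a left cai, and pass to a subnet so that $e_\alpha \to p$ weak* in $B^{**}$. Then $p \in A^{\perp \perp}$ since $e_\alpha \in A$. The norm convergence $e_\alpha e_\beta \to e_\beta$ (fixed $\beta$, $\alpha \to \infty$), combined with weak*-continuity of left multiplication by $p$, gives $p e_\beta = e_\beta$; a further weak*-limit in $\beta$ yields $p^2 = p$, and as a contractive idempotent in the $C^*$-algebra $B^{**}$, $p$ is self-adjoint, hence a projection. Since $J^{\perp \perp}$ is a right ideal of $A^{\perp \perp}$ containing its left identity $p$, one then has $J^{\perp \perp} = p A^{\perp \perp}$, and consequently $J = J^{\perp \perp} \cap A = \{a \in A : pa = a\}$. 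To see that $p$ is open in $B^{**}$, I would consider the closed right ideal $I_B := \{b \in B : pb = b\} = pB^{**} \cap B$ of the $C^*$-algebra $B$. By the classical Akemann correspondence, $I_B$ admits a positive left cai whose weak*-limit $p_0$ is open, and $p_0$ is the unique self-adjoint projection in $I_B^{\perp \perp}$ acting there as a left identity. Since $p$ also lies in $I_B^{\perp \perp}$ (using $J^{\perp \perp} \subseteq I_B^{\perp \perp}$) and acts as a left identity on $I_B$ by the very definition of $I_B$, uniqueness of left-identity projections forces $p = p_0$, so $p$ is open.

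For the reverse direction, I would set $q := 1 - p$ (closed in $B^{**}$) and $J := \{a \in A : pa = a\}$. Since $A$ is unital, $1 \in A^{\perp \perp}$ and so $q \in A^{\perp \perp}$, whence $qA \subseteq A^{\perp \perp} \cdot A \subseteq A^{\perp \perp}$, verifying the hypothesis of Corollary~\ref{Haything} for $X = A$. For each $\epsilon > 0$ and each positive invertible $d \in B$ with $d \geq q$, Corollary~\ref{Haything} with $a = 1$ produces $b \in A$ with $qb = q$ and $b^* b \leq d + \epsilon 1$; writing $b = q + pb$, one has $e := 1 - b = p - pb = p(1-b)$, so $e \in J$. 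For $x \in J$ (hence $qx = 0$, so $x^* q x = 0$) and any state $f$ on $B$, Cauchy--Schwarz gives $|f(bx)|^2 \leq f(x^* b^* b x) \leq f(x^*(d-q)x) + \epsilon f(x^* x)$; taking $d \searrow q$ weak* (possible because $q$ is closed) makes $f(x^*(d-q)x) \to 0$ by weak*-continuity of the associated state on $B$, and then $\epsilon \to 0$ forces $f(bx) \to 0$. Decomposing a general $\varphi \in A^*$ into states then shows that $bx \to 0$ weakly in $A$ for each fixed $x \in J$. Mazur's theorem, applied in the product $A^n$ for finitely many $x_1, \ldots, x_n \in J$ (exactly as in the Read's theorem proof above), then yields a bounded left approximate identity in $J$, and a final application of the $\mathfrak{F}_A$-roots machinery from Read's theorem contracts its bound down to $1$.

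The main obstacle will be the reverse direction. The Cauchy--Schwarz step is essential for converting the weak*-convergence $d \to q$ into weak convergence of $bx \to 0$ (this is where the hypothesis $qx = 0$ gets used), and the final step of upgrading a bounded left approximate identity into a contractive one requires the Read-style $\mathfrak{F}_A$-roots technology. By contrast, the forward direction becomes almost mechanical once one sees the trick of descending to the ambient $C^*$-algebra and matching $p$ with the open projection supplied by the Akemann correspondence via uniqueness of left-identity projections.
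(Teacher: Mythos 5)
Your proposal is essentially correct, and on the one step the paper identifies as the real content --- producing, for an open $p\in A^{\perp\perp}$ with $q=1-p$, elements $b\in A$ with $qb=q$ and $b^*b$ dominated by invertible positives decreasing to $q$ via Corollary~\ref{Haything} with $a=1$ --- you do exactly what the paper does. The differences are in the scaffolding. For the forward direction the paper simply defers to \cite{Hay} as ``standard''; your explicit argument (weak* limit $p$ of the left cai is a contractive idempotent, hence a projection, and is identified with the open projection of the closed right ideal $I_B=\{b\in B: pb=b\}$ of $B$ by uniqueness of left-identity projections in $I_B^{\perp\perp}$) is a clean and correct way to fill that in. For the reverse direction your endgame diverges: the paper observes that $x_s=1-a_s\in J$ converges weak* to $p$, so $p$ is a norm-one left identity for $J^{\perp\perp}$, and then quotes \cite[Proposition 2.5.8]{BLM}, where contractivity comes for free because Goldstine supplies an approximating net already in ${\rm Ball}(J)$; you instead prove $b_sx\to 0$ weakly by Cauchy--Schwarz against states, run Mazur to get a left approximate identity bounded by $2$, and then invoke the $\mathfrak{F}_A$-roots machinery to force contractivity. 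That route works but is heavier than necessary, and if you keep it you must add two small checks: that the net can be arranged with $\Vert b_s\Vert\le 1$ exactly (take $d+\epsilon 1$ of the form $\tfrac1n 1+\tfrac{n-1}{n}(1-\beta_t)$ as in the paper, so that $1-b_s$ lies in $\mathfrak{F}_A$ rather than only asymptotically), and that the roots $(\tfrac12 e_r)^{1/n}$ remain in $J$ (they do, being limits of polynomials in $e_r$ with no constant term and $J$ a closed right ideal). Neither is a gap, but the Goldstine route makes both points moot.
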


\begin{proof}   As explained in \cite{Hay}, this is easy and standard
functional analysis, most of it working 
in any Arens regular Banach algebra, except for the following Claim: 
if an open projection $p \in B^{**}$  lies in $A^{\perp \perp}$, then 
$p$ is a weak* limit of a net $(x_s) \subset A$ satisfying 
$p x_s = x_s$.  For example, if the Claim holds then $p$ is in the weak* closure
of the right ideal $I = \{ a \in A : a = pa \}$, and is a left identity
for that weak* closure, and then it is well known (see e.g.\ \cite[Proposition 2.5.8]{BLM}) that
$I$ has a left cai.    

To prove the Claim,
note that $q = 1-p$ is closed.  As in the first lines of the proof of Theorem \ref{readsps},
 there is a net in $B$ of strictly positive $f_t \searrow q$.  Let $f^s = f_t + \frac{1}{n} 1$,
where $s = (t,n)$.  By Corollary \ref{Haything} with $X = A$ and $a = 1$, there exists $a_s \in A$ with
$q a_s = q$ and $a_s^* a_s \leq f^s$. 
Then $a_s \to q$ weak* as in the proof of Theorem \ref{readsps},
so $x_s = 1-a_s$ satisfies $p x_s = x_s \to p$ weak*.    \end{proof}

 We will use the last result, Hay's theorem, to prove a rather general peak interpolation result, a noncommutative generalization of the Bishop type interpolation result seen in and above Figure 2 above.  It contains as  a special case Lemma \ref{peakthang222}, which we proved in the previous Section.  The reader may want to use that proof as a guide since it contains some of the ideas and strategy in a simpler setting.  

\begin{theorem} \label{peakthang22}   Suppose that $A$ is an operator algebra
(not necessarily approximately unital), a subalgebra of a unital $C^*$-algebra $B$.
Identify $A^1 = A + {\mathbb C} 1_B$.  Suppose
 that  $q$ is a closed projection in $B^{**}$ which lies in $(A^1)^{\perp \perp}$.
   If $b \in A$ with $b q= qb$,
 and $q b^* b q  \leq q d$ for an invertible positive $d \in B$ which commutes with $q$,
then  there exists an element $g \in A$ with $g q =  q g = b q$, and $g^* g \leq d$.
\end{theorem}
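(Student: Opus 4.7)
Set $f = d^{-1/2}$, which commutes with $q$ since $d$ does. Then the hypothesis $qb^*bq \leq qd$ together with $bq = qb$ gives
\[
fqb^*bqf \;\leq\; fqdf \;=\; q,
\]
so $\|qbf\|^2 \leq 1$. My plan is to produce $g$ of the form $g = b + z$ with $z \in J := \{a \in A : qa = aq = 0\}$; any such $g$ automatically satisfies $gq = qg = bq$, and it remains only to arrange $\|gf\| \leq 1$, equivalently $g^*g \leq d$.

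Since $b, z, f$ all commute with $q$, so does $gf$. In the corner decomposition associated to $p = 1-q$ one therefore has $gf = pgfp + qgfq = pgfp + bqf$, using that $qgq = qbq + qzq = bq$. The two summands lie in the orthogonal corners $pB^{**}p$ and $qB^{**}q$, so
\[
\|gf\|^2 \;=\; \max\bigl(\|pgfp\|^2,\ \|bqf\|^2\bigr).
\]
Since $\|bqf\| \leq 1$, it suffices to arrange $\|pgfp\| \leq 1$, i.e.\ $(pgp)^*(pgp) \leq pdp$ in the corner. Rephrased as a Banach space distance, one needs $z \in J$ with $d(bf, Jf) \leq 1$ in an appropriate ambient space.

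To secure this distance bound together with proximinality (so that the infimum is attained by some $g \in A$), I would generalize the M-ideal argument of Lemma \ref{peakthang222}. There, $Af$ was an M-ideal in $A^1 f$ via the central projection $e = 1-q$. Here the required claim is that $Jf$ is an M-ideal in $A^1 f$ (or in the slightly smaller space spanned by $bf$ and $Af$), whose L-summand structure comes from multiplication by $p$. The central-projection step of Lemma \ref{peakthang222} is replaced by the following observation: every element of the form $(b+z)f$ lies in the commutant of $q$ in $B^{**}$, and within that commutant $q$ is central, restoring the $\ell^\infty$-decomposition $\|y\| = \max(\|py\|,\|qy\|)$ needed for the L-summand. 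The richness of $J^{\perp\perp}$ inside the commutant comes from Hay's Theorem \ref{haysps}: the net $(1 - a_s)$ appearing in its proof lies in $\{a \in A^1 : qa = aq = 0\}$ and converges weak$^*$ to $p$, which supplies the needed approximate identity for the M-ideal identification. Once this is set up, the distance computation parallels Lemma \ref{peakthang222}'s $d(f, Af) = \|qf\|$, giving $d(bf, Jf) = \|qbf\| \leq 1$, and M-ideal proximinality delivers $g = b + z$ with $z \in J \subset A$ achieving the bound.

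The main obstacle is precisely this M-ideal verification for non-central $q$: one must check both that the contractive Banach-space projection given by multiplication by $p$ has the claimed $L$-summand complement on the bidual, and that the nearest point produced by proximinality lies in $A$ rather than merely in $A^1$. The latter follows from the form $g \in b + J \subset A$ coming from the M-ideal in the commutant, provided the proximinal element is chosen to respect the $A_q := A \cap \{q\}'$ subspace that contains $b$ and $J$. The central case of Lemma \ref{peakthang222} avoids these two issues simultaneously; the content of the present theorem is that both can still be handled using Hay's theorem as a substitute for centrality.
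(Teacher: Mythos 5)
Your overall strategy is the same as the paper's: perturb $b$ by an element of the two-sided annihilator ideal of $q$ in $A$, reduce to a distance estimate $d(bf,Jf)\leq 1$ with $f=d^{-1/2}$, and get attainment from $M$-ideal proximinality, with Hay's theorem supplying the weak*-density needed to identify multiplication by $q$ as the relevant $L$-projection. However, the proposal has a genuine gap: the step you yourself label ``the main obstacle'' --- verifying that $Jf$ is an $M$-ideal of an ambient space containing $bf$, with $L$-complement given by multiplication by $q$ --- is exactly the content of the theorem, and it is left unproved. The paper closes it by a specific construction you do not carry out: it sets $\tilde{D}=(1-q)(A^1)^{**}(1-q)\cap A^1$ and lets $C$ be the subalgebra of $A^1$ generated by $\tilde{D}$, $b$ and $1$; since every generator commutes with $q$, the projection $q$ is central in $C^{\perp\perp}$, and one then proves $q\in C^{\perp\perp}$ and $\tilde{D}^{\perp\perp}=(1-q)C^{\perp\perp}$, which is what makes multiplication by $q$ an $L$-projection on $(Cf)^*$ and lets Lemma \ref{dist} give $d(x,Df)=\Vert qx\Vert$. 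Your suggestion to work in ``the commutant of $q$'' or in $A\cap\{q\}'$ is the right instinct but is not a proof; in particular you never establish that $q$ (equivalently $1-q$) lies in the weak* closure of the relevant subalgebra, without which the $L$-projection identification and the distance formula both fail.

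A second, concrete error: you assert that the net $1-a_s$ from the proof of Hay's theorem lies in $\{a: qa=aq=0\}$. It does not --- Corollary \ref{Haything} only yields $qa_s=q$, hence $q(1-a_s)=0$, a \emph{one-sided} annihilation. To obtain two-sided annihilators of $q$ converging weak* to $1-q$ (which is what you need for $J^{\perp\perp}$, or $\tilde{D}^{\perp\perp}$, to contain $1-q$), the paper applies Hay's theorem twice, to left and to right ideals, and multiplies the resulting nets: $x_sz_t\in\tilde{D}$ with $x_sz_t\to 1-q$. Finally, you flag but do not resolve the issue of landing in $A$ rather than $A^1$; the paper handles this by showing $P$ preserves $((C\cap A)f)^{\perp}$ and invoking \cite[Proposition I.1.16]{HWW} to conclude $Df$ is an $M$-ideal in $(C\cap A)f$, so that the proximinal perturbation $y$ lies in $D\subset A$. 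These three missing pieces are precisely where the work of the theorem lives.
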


\begin{proof}     
Let $\tilde{D} = (1-q) (A^1)^{**} (1-q) \cap A^1$,
let $C$ be the closed subalgebra of $A^1$ generated by $\tilde{D}, b,$ and $1$, and let 
$$D = \{ x \in C \cap A : q x = 0 \} = \tilde{D} \cap A \cap C \subset A .$$   By Hay's theorem above
$1-q$  is a limit
of $x_s = (1-q) x_s \in A^1$.  Indeed in the language of the last proof,
$1-q \in J^{\perp \perp}  = p A^{**}$, so there exists $x_s \in J = 
\{ a \in A : a = (1-q) a \}$ with $x_s \to 1-q$ weak*.
 By symmetry  $1-q$  is a limit
of $z_t = z_t (1-q)  \in A^1$.  Hence $1-q$  is a limit of $x_s z_t \in \tilde{D}$, that is $1-q \in 
\tilde{D}^{\perp \perp} \subset C^{\perp \perp}$.
Thus  $q \in C^{\perp \perp}$.   Clearly 
$\tilde{D}^{\perp \perp}  \subset (1-q) C^{\perp \perp}.$  Conversely, since 
$\tilde{D}$ is an ideal in $C$, so that $\tilde{D}^{\perp \perp}$ is an ideal in $C^{\perp \perp}$, we have
$(1-q) C^{\perp \perp}  \subset \tilde{D}^{\perp \perp}$.  So 
$$(\tilde{D}f)^{\perp \perp} = \tilde{D}^{\perp \perp} f = (1-q) C^{\perp \perp} f = (1-q) (C f)^{\perp \perp},$$
 and so $\tilde{D}f$ is an $M$-ideal in $Cf$, using the fact that 
$q$ is a central projection in $C^{\perp \perp}$.
 The associated
$L$-projection $P$ onto the subspace $(\tilde{D} f)^\perp$ of $(Cf)^*$, is multiplication by $q$.
Let  $x \in (C \cap A) f$ and
$\varphi \in ((C \cap A) f)^\perp$, and let $(c_t)$ be a net in
$C$ with weak* limit $q$. Then $q \varphi(x) = \lim_t \varphi(c_t x) = 0,$
since $c_t x \in (C \cap A) f$ (because $C (C \cap A) \subset (C \cap A)$).   
We will make two deductions from this.
First, $P(((C \cap A)f)^\perp) \subset ((C \cap A)f)^\perp$.
So by \cite[Proposition I.1.16]{HWW}, we have that $Df$ is an $M$-ideal in $(C \cap A) f$.
Second, we deduce from  Lemma \ref{dist} with $X = (C \cap A) f$, that
$d(x,D f) = \Vert q x \Vert$ for all $x \in (C \cap A) f$.
So $d(bf,Df) = \Vert q bf \Vert   = \Vert f q b^* b q f \Vert^{\frac{1}{2}} \leq 1$.
By the distance formula in the introduction
there exists $y f \in D f$ such that 
$\Vert bf - yf \Vert   \leq 1.$
   Setting $g = b-y$ then $q g = g q = q b$, and $\Vert g f \Vert \leq 1,$ so that $f g^* g f \leq 1$ and  $g^* g \leq d$. 
\end{proof} 

Let us see that Theorem \ref{peakthang22}
 implies the classical Bishop-type peak interpolation result in the first 
paragraph of the paper (Figure 2), and therefore a `commutativising'  of the proof above 
gives a new and quick proof of that classical result.  If $B = C(K)$, and if 
$E$ is a peak or $p$-set in $K$ for $A \subset C(K)$, 
then by what we said in in the first three paragraphs
 of the introduction, the characteristic function of $E$ may be 
viewed as a closed projection $q$ in $A^{**}
=  A^{\perp \perp} \subset C(K)^{**}$.
The condition that $q b^* b q  \leq q d$ is
saying precisely that the strictly positive function $d \in C(K)$ dominates 
$|b|^2$ on $E$.  Thus Theorem \ref{peakthang22} gives $g \in A$ with $|g|^2 \leq d$
on $K$, and $g = b$ on $E$ (since $g \chi_E = b \chi_E$).   

In \cite{BRII} it is shown that one cannot drop the condition $bq = qb$ in the
last result.  It is easy to see one also cannot drop the condition $dq = qd$
(a counterexample: $A = {\mathbb C} q, b = q$ for a nontrivial projection $q \in M_2$).

The noncommutative peak interpolation theorem \ref{peakthang22}
should have many  applications. For example in the last section of \cite{BRII}
a special case of 
it is used to develop the theory of compact projections in algebras not necessarily 
having  any kind of approximate identity.  
 Using this special case we obtain a generalization
of Glicksberg's peak set theorem
mentioned in the first paragraph of the paper.
That is, even if $A$ has no approximate identity,
 the compact projections relative to $A$, that is the closed projections with
respect to a unitization, which lie in $A^{\perp \perp}$,
are precisely the `infs' of
the peak projections discussed at the end of the last section.  If $A$ is
separable then the compact projections relative to $A$ are just the peak projections
(see e.g.\ \cite[Lemma 1.3 and Proposition 6.4 (2)]{BRII}).   
We also obtain noncommutative Urysohn type lemmas in that setting (that is, 
given compact $q$ relative to $A$,
with $q \leq p$ open, there exists $a \in {\rm Ball}(A)$
 with $aq = qa = q$ and $a$ `small' on $1-p$.).    
   Indeed we show that 
these results follow from the case $d = 1$ of 
Theorem \ref{peakthang22}.   We also use  ideas from
papers with Neal and Read \cite{BNII,BRead}, which in turn use Read's theorem.
In the last mentioned  noncommutative Urysohn type lemma one may have $a$ satisfy
$||1-2a|| \leq 1$, and $a$ `equal to zero' on $1-p$,
that is $a(1-p) = (1-p)a = 0$, if $p \in A^{\perp \perp}$ (this follows from
Theorem 2.6 in \cite{BNII}).

Finally, we remark that
simple Tietze theorems of the flavour of the Rudin-Carleson theorem mentioned in the
 first paragraphs of this paper, follow from our interpolation theorems by adding a
hypothesis of the kind in Proposition 3.4 of \cite{Hay}. 

 \smallskip

{\sc Acknowledgements.}  We thank Damon Hay for correcting several typos in the first draft, and thank Manuel Lopez for coding the figures.

\end{document}